\newtheorem{theorem}{Theorem}[section]
\newtheorem{lemma}[theorem]{Lemma}
\newtheorem*{theorema}{Theorem A}
\newtheorem*{theoremb}{Theorem B}
\newtheorem*{theoremc}{Theorem C}
\newcommand{\Z}{\mathbb{Z}}
\newcommand{\N}{\mathbb{N}}
\DeclareMathOperator{\Ker}{Ker}
\DeclareMathOperator{\Coker}{Coker}
\DeclareMathOperator{\Ima}{Im}
\DeclareMathOperator{\Hom}{Hom}
\DeclareMathOperator{\End}{End}
\DeclareMathOperator{\Ext}{Ext}
\DeclareMathOperator{\ext}{ext}
\DeclareMathOperator{\Tor}{Tor}
\DeclareMathOperator{\dimv}{\underline{\dim}}
\DeclareMathOperator{\Spec}{Spec}
\DeclareMathOperator{\Rep}{Rep}
\DeclareMathOperator{\GL}{GL}
\DeclareMathOperator{\pd}{proj.dim}
\begin{document}
\title{Rigid integral representations of quivers over arbitrary commutative rings}

\author{William Crawley-Boevey}
\address{Fakult\"at f\"ur Mathematik, Universit\"at Bielefeld, 33501 Bielefeld, Germany}
\email{wcrawley@math.uni-bielefeld.de}

\subjclass[2020]{Primary 16G20; Secondary 16G30,16H20,13C10.}

%16 (1959-now) Associative rings and algebras {For the commutative case, see 13-XX}
%16G (1991-now) Representation theory of associative rings and algebras
%16G20 (1991-now) Representations of quivers and partially ordered sets
%16G30 (1991-now) Representations of orders, lattices, algebras over commutative rings [See also 16Hxx]
%16H (1991-now) Associative algebras and orders {For arithmetic aspects, see 11R52, 11R54, 11S45; for %representation theory, see 16G30}
%16H05 (1991-now) Separable algebras (e.g., quaternion algebras, Azumaya algebras, etc.)
%16H10 (2010-now) Orders in separable algebras
%16H15 (2010-now) Commutative orders
%16H20 (2010-now) Lattices over orders
%16H99 (2010-now) None of the above, but in this section

%13 (1959-now) Commutative algebra
%13C (1973-now) Theory of modules and ideals in commutative rings
%13C10 (1973-now) Projective and free modules and ideals in commutative rings [See also 19A13]

\keywords{Quiver representations, Rigid representations, Lattices over orders}

\thanks{Partially supported by the Deutsche Forschungsgemeinschaft (DFG, German Research Foundation) -- SFB-TRR 358/1 2023 -- 491392403}

\begin{abstract}
In earlier work, the author classified rigid representations of a quiver by finitely generated free modules over a principal ideal ring. Here we extend the results to representations of a quiver by finitely generated projective modules over an arbitrary commutative ring.
\end{abstract}

\maketitle

\section{Introduction}
In \cite{CBrirq} we studied rigid representations of a quiver over a principal ideal domain. Here we generalize the results to arbitrary commutative base rings $R$ (always unital and non-zero). Let $Q=(Q_0,Q_1,h,t)$ be a finite quiver. We are interested in representations of $Q$ in the category of finitely generated projective $R$-modules, so given by a finitely generated projective $R$-module $X_i$ for each vertex $i\in Q_0$ and an $R$-module homomorphism $X_{t(a)}\to X_{h(a)}$ for each arrow $a\in Q_1$. Letting $RQ$ be the path algebra of $Q$ over $R$, it is equivalent to consider \emph{$RQ$-lattices}, that is (left) $RQ$-modules $X$ which are finitely generated projective as an $R$-module; the $R$-module associated to vertex $i$ is then $X_i = e_i X$, where $e_i$ is the trivial path at $i$. An $RQ$-module $X$ is said to be \emph{rigid} if $\Ext^1_{RQ}(X,X)=0$. Our first result generalizes \cite[Lemma~2]{CBrirq}.

\begin{theorema}
If $X$ and $Y$ are rigid $RQ$-lattices, then $\Hom_{RQ}(X,Y)$ and $\Ext^1_{RQ}(X,Y)$ are finitely generated projective $R$-modules.
\end{theorema}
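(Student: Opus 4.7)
The plan is to start from the standard length-one projective resolution of $X$ as an $RQ$-module,
$$0 \to \bigoplus_{a \in Q_1} RQ\, e_{h(a)} \otimes_R X_{t(a)} \to \bigoplus_{i \in Q_0} RQ\, e_i \otimes_R X_i \to X \to 0,$$
whose middle terms are finitely generated projective $RQ$-modules because $X$ is a lattice. Applying $\Hom_{RQ}(-,Y)$ produces a four-term exact sequence
$$0 \to \Hom_{RQ}(X,Y) \to C^0 \xrightarrow{d} C^1 \to \Ext^1_{RQ}(X,Y) \to 0,$$
with $C^0 := \bigoplus_i \Hom_R(X_i,Y_i)$ and $C^1 := \bigoplus_a \Hom_R(X_{t(a)}, Y_{h(a)})$ finitely generated projective over $R$; in particular $\Ext^1_{RQ}(X,Y)$ is finitely presented. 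Taking $Y=X$, rigidity forces $d$ to be surjective onto the projective module $C^1$, so the sequence splits and $\End_{RQ}(X)$ is a finitely generated projective direct summand of $C^0$; the same applies to $\End_{RQ}(Y)$. Once we know $\Ext^1_{RQ}(X,Y)$ is projective, the right end of the four-term sequence splits, so $\Ima(d)$ is projective, and then the left short exact sequence splits, giving $\Hom_{RQ}(X,Y)$ as a projective summand of $C^0$. Thus everything reduces to proving $\Ext^1_{RQ}(X,Y)$ is projective.

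Since $\Ext^1_{RQ}(X,Y)$ is already finitely presented, I would invoke the standard criterion that it is projective iff the rank function $\mathfrak p \mapsto \dim_{\kappa(\mathfrak p)}(\Ext^1_{RQ}(X,Y) \otimes_R \kappa(\mathfrak p))$ is locally constant on $\Spec R$. Formation of $\Ext^1$ commutes with arbitrary base change (being a cokernel of a map between f.g.\ projectives), so this rank equals $\dim_{\kappa(\mathfrak p)} \Ext^1_{\kappa(\mathfrak p)Q}(X_{\kappa(\mathfrak p)}, Y_{\kappa(\mathfrak p)})$; the same argument shows rigidity descends, so $X_{\kappa(\mathfrak p)}$ and $Y_{\kappa(\mathfrak p)}$ are rigid representations of $Q$ with dimension vectors $(\rank_R X_i)_i$ and $(\rank_R Y_i)_i$, which are locally constant on $\Spec R$ because each $X_i$, $Y_i$ is finitely generated projective over $R$.

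The remaining input, which I expect to be the main obstacle, is the field-level statement that $\dim \Ext^1_{kQ}(X_k, Y_k)$ depends only on $\dimv X_k$ and $\dimv Y_k$ whenever $X_k$, $Y_k$ are rigid representations of $Q$ over a field $k$. Over an algebraic closure this follows from the canonical decomposition theory of Kac and Schofield: a rigid module decomposes uniquely as a direct sum of rigid indecomposables whose dimension vectors are the real Schur roots summing to the given vector, so the module is determined up to isomorphism by its dimension vector, and therefore so are $\dim \Hom$ and $\dim \Ext^1$. Over arbitrary $k$ one passes to $\bar k$ by flat base change of $\Hom$ and $\Ext^1$ along $k \hookrightarrow \bar k$. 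Granting this ingredient, the rank function is locally constant, so $\Ext^1_{RQ}(X,Y)$ is finitely generated projective, and by the splitting argument above so is $\Hom_{RQ}(X,Y)$.
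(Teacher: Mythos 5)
Your argument reproduces the paper's proof for the case of a \emph{reduced} base ring, but it rests on a projectivity criterion that is false in general. The criterion ``a finitely generated (or finitely presented) $R$-module $M$ is projective iff $\mathfrak p\mapsto \dim_{\kappa(\mathfrak p)} M\otimes_R\kappa(\mathfrak p)$ is locally constant on $\Spec R$'' requires $R$ to be reduced; this is \cite[Exercise 20.13]{E} and Lemma~\ref{l:redring}(i) of the paper, where the hypothesis is explicit. For $R=k[\epsilon]/(\epsilon^2)$ and $M=R/(\epsilon)$ the rank function on the one-point spectrum is trivially constant, yet $M$ is not projective; and such rings are squarely within the scope of the theorem (the introduction emphasizes the truncated polynomial case via \cite{GLS}). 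So as written your proof establishes Theorem A only for reduced $R$, and the non-reduced case is precisely where the real work of the paper lies.

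The missing idea is a lifting argument (Lemmas~\ref{l:idempscheme}, \ref{l:liftprojs} and \ref{l:liftlatts}): one constructs a \emph{reduced} commutative ring $R'$ with a surjection $\theta\colon R'\to R$ whose kernel lies in $J(R')$, such that $X$ and $Y$ lift to $R'Q$-lattices $X'$, $Y'$ that are still rigid. Concretely, $R'$ is built from $\Z[x_r: r\in R]$ by adjoining the entries of idempotent matrices presenting the projective modules $e_iX$, $e_iY$ and localizing; reducedness of $R'$ follows from smoothness of the scheme of idempotents, and rigidity of the lifts follows from base change for $\Ext^1$ together with Nakayama. One then runs your argument over the reduced ring $R'$ and descends along $\theta$ using the base-change isomorphisms for $\Hom$ and $\Ext^1$. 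A secondary, smaller point: your field-level input needs not just that a rigid $\overline{k}Q$-module is determined by its dimension vector over a fixed algebraically closed field (the open-orbit argument), but that $\ext(\alpha,\beta)$ and $\hom(\alpha,\beta)$ are independent of the algebraically closed field, since the residue fields of $\Spec R$ vary (even in characteristic); this is the result of \cite{CBsgrq}, built on \cite{S}, and should be invoked explicitly rather than deduced from uniqueness of the rigid module over each field separately.
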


Recall that a finitely generated projective $R$-module $P$ has \emph{constant rank $n$} if $P_{\mathfrak{p}}$ is a free $R_{\mathfrak{p}}$-module of rank $n$ for each prime ideal $\mathfrak{p}$ of $R$, or equivalently for each maximal ideal. If $\Spec R$ is connected, or equivalently $R$ has no idempotents other than~0 and~1, then every finitely generated projective $R$-module has constant rank, see for example \cite[Exercise 20.12]{E}. The projective modules of constant rank 1 are the invertible modules, whose isomorphism classes are indexed by the Picard group. We say that an $RQ$-lattice $X$ has \emph{rank vector $\alpha\in\N^{Q_0}$} if $e_iX$ has constant rank $\alpha_i$ for each vertex $i$, and that $X$ has \emph{pointwise constant rank} if it has rank vector $\alpha$ for some $\alpha$; this holds if $X$ is \emph{pointwise free}, that is, $e_i X$ is a free $R$-module for all $i$. 

An $RQ$-module $X$ is said to be \emph{exceptional} if it is rigid and the natural map $R\to \End_{RQ}(X)$ is an isomorphism. If $K$ is an algebraically closed field, then by definition the possible dimension vectors of exceptional $KQ$-modules are the \emph{real Schur roots} for $Q$. By results of \cite{CBesrq}, \cite{CBsgrq} or \cite{CK} these do not depend on the choice of the algebraically closed field $K$. Using \cite{CBesrq}, Hubery and Krause characterize the real Schur roots as the positive real roots for which the corresponding reflection is a non-crossing partition \cite[Corollary 4.8]{HubKr}.

If $P$ is an $R$-module and $X$ is an $RQ$-module, then $X\otimes_R P$ is naturally an $RQ$-module. If $P$ is finitely generated projective and $X$ is a lattice, then so is $X\otimes_R P$.

\begin{theoremb}
There is an exceptional $RQ$-lattice of rank vector $\alpha\in\N^{Q_0}$ if and only if $\alpha$ is a real Schur root. In this case there is a unique exceptional pointwise free $RQ$-lattice $X$ of rank vector $\alpha$, and any rigid $RQ$-lattice of rank vector $\alpha$ is exceptional and isomorphic to $X\otimes_R P$ with $P$ a finitely generated projective $R$-module of constant rank~1. Moreover $P$ is uniquely determined up to isomorphism.
\end{theoremb}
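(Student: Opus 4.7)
The plan is to reduce everything to the case of the PID $\Z$, handled in \cite{CBrirq}, together with a pointwise reduction to a field. The key technical ingredient will be the Ringel resolution: for any $RQ$-lattice $M$ one has a length-one projective resolution of the form
\[
0 \to \bigoplus_{a \in Q_1} RQ\, e_{h(a)} \otimes_R e_{t(a)} M \to \bigoplus_{i \in Q_0} RQ\, e_i \otimes_R e_i M \to M \to 0,
\]
whose middle terms are finitely generated projective over $RQ$ (since $e_i M$ is finitely generated projective over $R$), and which is preserved by every base change $R \to S$. Applying $\Hom_{RQ}(-,N)$ for another $RQ$-lattice $N$ presents $\Hom_{RQ}(M,N)$ and $\Ext^1_{RQ}(M,N)$ as kernel and cokernel of an explicit map between finitely generated projective $R$-modules. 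When $M$ and $N$ are rigid, Theorem A makes both flanking terms projective, so the four-term sequence splits into two short exact sequences of projectives and hence base-changes cleanly along any ring map $R \to S$: $\Hom_{SQ}(M \otimes_R S, N \otimes_R S) \cong \Hom_{RQ}(M,N) \otimes_R S$, and likewise for $\Ext^1$.

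The ``only if'' direction is then immediate by base-changing an exceptional $X$ to any residue field. For existence, I take the exceptional $\Z$-free $\Z Q$-lattice $X_{\Z}$ of rank vector $\alpha$ from \cite{CBrirq} and set $X := X_{\Z}\otimes_{\Z} R$, which is pointwise free of rank vector $\alpha$; the base-change argument applied along $\Z \to R$ to the Ringel resolution of $X_{\Z}$ (whose middle terms are $\Z$-free of finite rank) gives $\End_{RQ}(X) = R$ and $\Ext^1_{RQ}(X,X) = 0$. For the structure statement, given a rigid $RQ$-lattice $Y$ of rank vector $\alpha$, I set $P := \Hom_{RQ}(X,Y)$, which is finitely generated projective by Theorem A. At each residue field $k$ of $R$, writing $X_k := X \otimes_R k$ and similarly for $Y$, the base-change formula identifies $P \otimes_R k$ with $\Hom_{kQ}(X_k, Y_k)$; since the canonical decomposition of the real Schur root $\alpha$ is $\alpha$ itself, any rigid $\bar kQ$-module of dimension vector $\alpha$ is the unique exceptional one, and a brief Galois descent then forces $X_k \cong Y_k$ with $\Hom_{kQ}(X_k,Y_k)$ one-dimensional over $k$. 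Hence $P$ has constant rank $1$. The evaluation map $X \otimes_R P \to Y$ becomes an isomorphism modulo every maximal ideal (a nonzero $k$-linear map between the isomorphic exceptional modules $X_k$ and $Y_k$ is an isomorphism), so is surjective by Nakayama; its kernel is a direct summand of the projective $X \otimes_R P$ with zero rank vector, hence vanishes. Exceptionality of $Y$ is inherited from $X$ since $-\otimes_R P$ is an auto-equivalence of $RQ\lMod$.

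For uniqueness of $P$, observe that any other decomposition $Y \cong X \otimes_R P'$ gives $\Hom_{RQ}(X, Y) \cong \Hom_{RQ}(X, X \otimes_R P') = R \otimes_R P' = P'$, so $P$ is determined by $Y$. For uniqueness of the pointwise free exceptional $X$, if $X'$ is another such lattice, then $X' \cong X \otimes_R P$ for some rank-one projective $P$ by the structure statement just proved, and the pointwise freeness of $X'$ forces each $P^{\oplus \alpha_i} = e_i X \otimes_R P$ to be a free $R$-module; taking top exterior powers yields $P^{\otimes \alpha_i} \cong R$, i.e.\ $\alpha_i[P] = 0$ in the Picard group of $R$ for every $i$, and since a real root is primitive we have $\gcd_i \alpha_i = 1$, so $[P] = 0$ and $X' \cong X$. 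The main obstacle I expect is the base-change analysis of the first paragraph, which requires Theorem A to make the Ringel four-term sequence split; once that is in place the remaining arguments are standard, with the only nontrivial external input being the classical identification, over a field, of rigid representations of a real-Schur-root dimension vector with the exceptional one.
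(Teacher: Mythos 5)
Your proposal is correct and follows the same overall strategy as the paper: existence by inducing the exceptional $\Z Q$-lattice $X_0$ of \cite{CBrirq} along $\Z\to R$, the structure statement via the evaluation map $f:X\otimes_R\Hom_{RQ}(X,Y)\to Y$ checked at homomorphisms $R\to K$ with $K$ algebraically closed, uniqueness of $P$ via $P\cong\Hom_{RQ}(X,Y)$, and uniqueness of the pointwise free lattice from indivisibility of the real root $\alpha$. Two of your steps diverge from the paper in a way worth recording. First, to show $f$ is an isomorphism the paper splits into cases: for reduced $R$ it invokes Lemma~\ref{l:redring}(iii), and for general $R$ it lifts $Y$ to a reduced ring via Lemma~\ref{l:liftlatts}. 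You instead argue directly for arbitrary $R$: surjectivity of $f$ by Nakayama, then the kernel splits off as an $R$-module summand because $Y$ is $R$-projective, is therefore finitely generated with $(\Ker f)^K=0$ for all $K$, hence zero. This works (given that $\Hom_{RQ}(X,Y)^K\cong\Hom_{KQ}(X^K,Y^K)$ and the rank statement are available from Theorem~\ref{t:homprop}, whose proof already absorbed the reduced/non-reduced reduction) and shortens this part of the argument. Second, for the pointwise free case the paper shows $P$ is stably free using $1+\sum a_i\alpha_i=\sum b_i\alpha_i$ and then cites Lam's theorem that a stably free rank-one projective over a commutative ring is free; you instead take top exterior powers of $P^{\oplus\alpha_i}\cong R^{\alpha_i}$ to get $P^{\otimes\alpha_i}\cong R$, so $\alpha_i[P]=0$ in the Picard group for all $i$, and conclude $[P]=0$ from $\gcd_i\alpha_i=1$. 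This determinant argument is more self-contained and equally valid. One small remark: your appeal to Galois descent over non-algebraically-closed residue fields is unnecessary — constant rank and the isomorphism criteria only need to be tested against homomorphisms to algebraically closed fields, which is all the paper ever uses.
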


The existence follows easily from results in \cite{CBrirq}. Concerning uniqueness, the case when $R$ is a principal ideal domain is in \cite{CBrirq}, and the case when $R$ is a truncated polynomial ring $K[\epsilon]/(\epsilon^n)$ is a special case of \cite[Theorem 1.2]{GLS}.

\begin{theoremc}
An $RQ$-lattice $X$ of pointwise constant rank is rigid if and only if it has a decomposition
\[
X \cong (X_1\otimes_R P_1) \oplus \dots \oplus (X_r \otimes_R P_r)
\]
where the $X_i$ are pairwise non-isomorphic exceptional pointwise free $RQ$-lattices satisfying $\Ext_{RQ}^1(X_i,X_j)=0$ for all $i,j$ and the $P_i$ are non-zero finitely generated projective $R$-modules of constant rank. Moreover this decomposition is unique up to isomorphism and reordering. 
\end{theoremc}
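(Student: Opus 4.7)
The easy direction is immediate: if $X \cong \bigoplus_i X_i \otimes_R P_i$, then each $P_i$ is a summand of some $R^{n_i}$, so $X_i \otimes_R P_i$ is a summand of $X_i^{n_i}$, and $\Ext^1_{RQ}(X_i \otimes_R P_i, X_j \otimes_R P_j)$ is a summand of $\Ext^1_{RQ}(X_i, X_j)^{n_i n_j} = 0$; summing gives $\Ext^1_{RQ}(X,X) = 0$. For the converse my plan is to reduce modulo each maximal ideal $\mathfrak{m}$ of $R$, use classical Krull--Schmidt together with Theorem~B to identify the exceptional constituents and their multiplicities, and then lift the decomposition to $R$ by peeling off one isotypic summand at a time via an evaluation map.

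The canonical two-term resolution $0 \to \bigoplus_{a \in Q_1} RQ\, e_{h(a)} \otimes_R e_{t(a)} X \to \bigoplus_{i \in Q_0} RQ\, e_i \otimes_R e_i X \to X \to 0$ of any $RQ$-lattice, combined with flatness of the $e_i X$, shows that $\Hom_{RQ}$ and $\Ext^1_{RQ}$ between lattices commute with base change along any $R \to S$. Consequently each $X \otimes_R k(\mathfrak{m})$ is a rigid $k(\mathfrak{m})Q$-module and decomposes by the classical theory as $\bigoplus_\alpha V_\alpha^{n_\alpha(\mathfrak{m})}$, with $V_\alpha = X_\alpha \otimes_R k(\mathfrak{m})$ the exceptional of real Schur root $\alpha$ provided by Theorem~B. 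The pointwise constant rank of $X$ together with the uniqueness of a rigid representation of given dimension vector over a field (Kac--Schofield) forces the set $\mathcal{A}$ of occurring roots, and the multiplicities $n_\alpha := n_\alpha(\mathfrak{m})$, to be independent of $\mathfrak{m}$; and $\Ext^1_{RQ}(X_\alpha, X_\beta)$ is projective by Theorem~A and zero at every residue field, hence zero, for all $\alpha, \beta \in \mathcal{A}$. Choose an ordering $\mathcal{A} = \{\alpha_1, \dots, \alpha_r\}$ so that $\Hom_{RQ}(X_{\alpha_i}, X_{\alpha_j}) = 0$ for $i > j$, a standard exceptional-collection ordering transferred from any residue field to $R$ via base change of Hom.

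Set $P_r := \Hom_{RQ}(X_{\alpha_r}, X)$, a finitely generated projective $R$-module of constant rank $n_{\alpha_r}$ by Theorem~A and the ordering (which forces the fibre rank to be exactly $n_{\alpha_r}$). The evaluation map $\phi \colon X_{\alpha_r} \otimes_R P_r \to X$ identifies at each residue field with the canonical inclusion of the $V_{\alpha_r}$-isotypic summand $V_{\alpha_r}^{n_{\alpha_r}}$ into $X \otimes_R k(\mathfrak{m})$, so $\phi$ is fibrewise injective with cokernel of constant fibre dimension vector $\sum_{i < r} n_{\alpha_i} \alpha_i$; hence $\Coker \phi$ is a lattice of pointwise constant rank. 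The finitely generated $R$-module $\Ext^1_{RQ}(\Coker \phi, X_{\alpha_r} \otimes_R P_r)$ (finitely generated from the two-term resolution of $\Coker \phi$) vanishes at every residue field, since there the fibre sequence splits by rigidity of $X \otimes_R k(\mathfrak{m})$; Nakayama then yields the global vanishing, and the short exact sequence $0 \to X_{\alpha_r} \otimes_R P_r \to X \to \Coker \phi \to 0$ splits as $RQ$-modules. Iterating on the rigid lattice $\Coker \phi$, which is supported on the smaller set $\{\alpha_1, \dots, \alpha_{r-1}\}$, completes the decomposition.

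Uniqueness follows because the peeling constructs each successive summand $X_{\alpha_i} \otimes_R P_i$ intrinsically from $X$ and the residual lattice, as $X_{\alpha_i} \otimes_R \Hom_{RQ}(X_{\alpha_i}, X^{(i)})$. The main obstacle is controlling $\phi$ over an arbitrary commutative ring, where one cannot Henselian-lift idempotents from residue fields and so cannot transport the field-level Krull--Schmidt decomposition directly; even a Krull--Schmidt theorem for lattices over a local ring can fail in general. The proposed route sidesteps this by using the exceptional-collection ordering to force $\phi$ to be fibrewise a split monomorphism onto the correct summand, extracting projectivity of $\Coker \phi$ from the resulting constancy of fibre dimensions, and promoting the fibrewise Ext-vanishing to a global splitting via base change and Nakayama applied to the finitely generated $R$-module $\Ext^1_{RQ}(\Coker \phi, X_{\alpha_r} \otimes_R P_r)$.
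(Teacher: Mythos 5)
Your strategy for the converse is essentially the paper's: order the exceptional constituents as an exceptional sequence, peel off the last one via the evaluation map $\phi: X_{\alpha_r}\otimes_R P_r\to X$ with $P_r=\Hom_{RQ}(X_{\alpha_r},X)$, show the cokernel is again a rigid lattice, split, and induct. But there is a genuine gap at the step you yourself flag as the crux: you extract projectivity of $\Coker\phi$ (and hence the splitting of $0\to\Ker\phi\to X_{\alpha_r}\otimes_R P_r\to\operatorname{Im}\phi\to 0$, and the subsequent base-change identity $\Ext^1_{RQ}(\Coker\phi,-)\otimes_R k(\mathfrak{m})\cong\Ext^1_{k(\mathfrak{m})Q}(\dots)$, which you need for the Nakayama argument) from the constancy of the fibre dimensions of $\Coker\phi$. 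The criterion ``finitely generated with locally constant fibre dimension $\Rightarrow$ projective'' is valid only for \emph{reduced} rings: over $R=k[\epsilon]/(\epsilon^2)$ the module $R/(\epsilon)$ has constant fibre dimension $1$ on the one-point spectrum but is not projective. So your argument proves the theorem only when $R$ is reduced. The paper's proof is explicitly conditional on reducedness at exactly this point and then supplies the missing ingredient for general $R$: a lifting lemma producing a \emph{reduced} ring $R'$ with a surjection $R'\to R$ whose kernel lies in $J(R')$, together with lifts of $X$ (and of the $X_i$, obtained by inducing up from $\Z Q$-lattices) to $R'Q$-lattices preserving rigidity; one runs the argument over $R'$ and descends. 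Without this (or some substitute), your proof does not cover non-reduced base rings, which is the main new difficulty of the theorem beyond the principal ideal case.

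Two smaller points. First, you work with the residue fields $k(\mathfrak{m})$ themselves, invoking uniqueness of a rigid representation of given dimension vector; that open-orbit argument is stated in the paper only for algebraically closed fields, and over a general field the indecomposable rigid summands can have division rings as endomorphism rings, so you should pass to $\overline{k(\mathfrak{m})}$ (as the paper does throughout) or justify the descent. Second, your uniqueness paragraph asserts that the summands are ``intrinsically constructed'' but does not address an arbitrary competing decomposition, which need not come with the chosen exceptional ordering; the paper first matches the constituents over an algebraically closed field and then inducts, identifying $P_r\cong\Hom_{RQ}(X_r,X)\cong P'_r$ at each stage. These are repairable, but the reduced-versus-non-reduced issue is a real missing idea.
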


Note that in general one doesn't have uniqueness for rigid pointwise free lattices of a given rank vector. For example let $Q$ be the quiver $1\to 2$ and $R$ a reduced ring with a stably free projective module $P$ which is not free, say $P \oplus R^n \cong R^m$. Then $(RQe_2 \otimes_R P) \oplus (RQe_1 \otimes_R R^n)$ and $(RQe_2 \otimes_R R^{m-n}) \oplus (RQe_1 \otimes_R R^n)$ are both rigid and pointwise free of rank vector $(n,m)$, but not isomorphic.

\section{Commutative rings}
If $R\to S$ is a homomorphism of commutative rings, and $X$ is an $R$-module, then there is an induced $S$-module $X^S = S\otimes_R X$. Our theorems are deduced from results about representations of quivers over fields, often algebraically closed, and so we are particularly interested in homomorphisms $R\to K$ with $K$ an algebraically closed field. Clearly if $P$ is a finitely generated projective $R$-module, then $P$ has constant rank $n$ if and only if $\dim_K P^K = n$ for all such homomorphisms, and in this case $P^S$ also has constant rank $n$.

\begin{lemma}
\label{l:commring}
Let $R$ be a commutative ring.
\begin{itemize}
\item[(i)]If $X$ is a finitely generated $R$-module, then $X = 0$ if and only if $X^K=0$ for all homomorphisms $R\to K$ with $K$ an algebraically closed field.

\item[(ii)]If $\theta:X\to Y$ is a homomorphism of $R$-modules and $Y$ is finitely generated, then $\theta$ is onto if and only if $\theta^K:X^K\to Y^K$ is onto for all $R\to K$ with $K$ an algebraically closed field.
\end{itemize}
\end{lemma}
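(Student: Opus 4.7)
For part (i), the forward direction is immediate since $-\otimes_R K$ is a functor. For the reverse, I would argue contrapositively: assume $X\neq 0$. Since $X$ is finitely generated, Nakayama's lemma gives a maximal ideal $\mathfrak{m}$ of $R$ with $X/\mathfrak{m}X\neq 0$ (otherwise $X_\mathfrak{m}=0$ for every maximal $\mathfrak{m}$, forcing $X=0$). Let $k=R/\mathfrak{m}$ and let $K$ be an algebraic closure of $k$; this yields a homomorphism $R\to K$. Then $X^K \cong (X/\mathfrak{m}X)\otimes_k K$, which is non-zero because $K$ is a non-zero free, hence faithfully flat, $k$-module. So some $X^K$ is non-zero, as required.

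For part (ii), I would reduce to (i) by passing to the cokernel. Let $C=\Coker(\theta)$, which is finitely generated because $Y$ is. Right exactness of $-\otimes_R K$ gives $C^K \cong \Coker(\theta^K)$ for every homomorphism $R\to K$. Therefore $\theta$ is onto if and only if $C=0$, and by part (i) this holds if and only if $C^K=0$ for all $R\to K$ with $K$ algebraically closed, i.e.\ if and only if each $\theta^K$ is onto.

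The main conceptual point is the Nakayama step in (i); everything else is formal from right exactness of tensor product and faithful flatness of a field extension. I don't anticipate any real obstacle, as both parts are standard applications of these two facts, and the finite generation hypothesis is used precisely where Nakayama requires it.
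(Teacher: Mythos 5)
Your proof is correct and is exactly the argument the paper intends: the paper's proof is the one-line remark that (i) is essentially Nakayama's Lemma and (ii) follows, and you have simply filled in those details (locating a maximal ideal with $X/\mathfrak{m}X\neq 0$, passing to the algebraic closure of $R/\mathfrak{m}$, and reducing (ii) to (i) via the cokernel and right exactness). Nothing further is needed.
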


\begin{proof}
(i) is essentially Nakayama's Lemma, and (ii) follows.
\end{proof}

Recall that a commutative ring $R$ is \emph{reduced} if it has no non-zero nilpotent elements.
If $\mathfrak{p}$ is a prime ideal in $R$, we write $\overline{k(\mathfrak{p})}$ for an algebraic closure of the quotient field of $R/\mathfrak{p}$.

\begin{lemma}
\label{l:redring}
Let $R$ be a reduced commutative ring.
\begin{itemize}
\item[(i)]If $X$ is a finitely generated $R$-module, then $X$ is projective if and only if the function 
$\Spec R\to \Z$, $\mathfrak{p}\mapsto \dim_{\overline{k(\mathfrak{p})}} X^{\overline{k(\mathfrak{p})}}$ is locally constant.

\item[(ii)]If $X$ is a finitely generated $R$-module, then $X$ is projective of constant rank $n$ if and only if $\dim_K X^K = n$ for all homomorphisms $R\to K$ with $K$ an algebraically closed field.

\item[(iii)]If $R$ is reduced and $\theta:X\to Y$ is a homomorphism with $X,Y$ finitely generated projective, then $\theta$ is a split monomorphism if and only if $\theta^K$ is a monomorphism for all $R\to K$ with $K$ an algebraically closed field.
\end{itemize}
\end{lemma}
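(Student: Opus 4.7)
My plan is to prove the three parts in order, using (i) as the main input for (ii) and (iii). For part (i), the forward direction is standard: a finitely generated projective module $X$ is locally free of finite rank, and its rank at a prime $\mathfrak{p}$ equals $\dim_{k(\mathfrak{p})} X\otimes_R k(\mathfrak{p}) = \dim_{\overline{k(\mathfrak{p})}} X^{\overline{k(\mathfrak{p})}}$, which is well known to be locally constant on $\Spec R$. The converse is the non-trivial statement that, over a reduced ring, a finitely generated module with locally constant fiber dimension must be projective; I would cite a standard reference in commutative algebra rather than reprove it here, since this is where the reducedness hypothesis is essentially used (the example $R=k[\epsilon]/(\epsilon^2)$, $X=k$ shows the statement fails for non-reduced rings).

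For part (ii), every homomorphism $R\to K$ with $K$ algebraically closed factors through $\overline{k(\mathfrak{p})}$ for $\mathfrak{p}=\Ker(R\to K)$, and base change between algebraically closed fields preserves dimension. So the hypothesis is equivalent to the fiber dimension function on $\Spec R$ being identically $n$, which is certainly locally constant; part (i) then yields that $X$ is projective of constant rank $n$. The reverse direction is immediate by localising $X$ at the prime $\Ker(R\to K)$ and using that $X_{\mathfrak{p}}$ is free of rank $n$.

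For part (iii), the forward direction is immediate: any retraction of $\theta$ base changes to a retraction of $\theta^K$. For the converse, set $C=\Coker\theta$; right exactness of tensor product gives $C^K=\Coker(\theta^K)$, so injectivity of $\theta^K$ yields $\dim_K C^K=\dim_K Y^K-\dim_K X^K$. Both quantities on the right are locally constant in $\mathfrak{p}$ by part (ii), so part (i) shows that $C$ is projective. Hence the short exact sequence $0\to\Ima\theta\to Y\to C\to 0$ splits, making $\Ima\theta$ a direct summand of $Y$ and, in particular, projective. The induced surjection $X\to\Ima\theta$ then also splits, giving $X\cong\Ker\theta\oplus\Ima\theta$. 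Since $\theta^K$ is injective, the composition $X^K\to(\Ima\theta)^K\to Y^K$ equals $\theta^K$ and is injective, forcing $X^K\cong(\Ima\theta)^K$, and hence $(\Ker\theta)^K=0$ for every $R\to K$. As $\Ker\theta$ is finitely generated (being a direct summand of $X$), Lemma~\ref{l:commring}(i) forces $\Ker\theta=0$, and $\theta$ embeds $X$ as a direct summand of $Y$.

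The main obstacle is the converse direction of part (i): the commutative-algebra fact that a finitely generated module over a reduced ring with locally constant fiber dimension is projective. All remaining arguments reduce to rank-bookkeeping using (i) together with right exactness of base change and the splitting criterion for short exact sequences with a projective quotient.
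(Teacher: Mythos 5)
Your proposal is correct and follows essentially the same route as the paper: parts (i) and (ii) are quoted as standard commutative algebra (the paper cites \cite[Exercise 20.13]{E}), and your argument for (iii) --- the cokernel has locally constant fiber dimension, hence is projective, so the image splits off $Y$, and the kernel is a finitely generated summand killed by all base changes to algebraically closed fields, hence zero by Lemma~\ref{l:commring}(i) --- is exactly the paper's argument.
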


\begin{proof}
(i) and (ii) follow from \cite[Exercise 20.13]{E}. (iii) If $\theta^K$ is a monomorphism for all $R\to K$, then the mapping $\mathfrak{p}\mapsto \Coker \theta^{\overline{k(\mathfrak{p})}}$ has locally constant dimension, so $\Coker \theta$ is projective. Then $\Ima \theta$ is a summand of $Y$, so projective. But then the sequence $0\to \Ker \theta\to X\to \Ima \theta\to 0$ splits, so is exact after inducing to $K$. Thus $\Ker \theta^K=0$. Also $\Ker \theta$ is summand of $X$, so finitely generated. Thus $\Ker \theta$ is zero.
\end{proof}

\begin{lemma}
\label{l:idempscheme}
If $n\ge 0$ and $R$ is reduced, then so is the ring 
\[
R' = R[x_{ij}:1\le i,j\le n]/\left( \sum_{\ell=1}^n x_{i\ell}x_{\ell j} - x_{ij} : 1\le i,j\le n\right)
\]
\end{lemma}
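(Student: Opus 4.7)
The plan is to decompose $R'$ as a product $\prod_{k=0}^n R'_k$ of components indexed by the rank of the universal idempotent $X = (x_{ij})$, and then show that each $\Spec R'_k$ admits a Zariski open cover by localisations of polynomial rings over $R$. Since polynomial rings and their localisations over a reduced ring are reduced, this will prove the lemma. For the decomposition, I would consider $D(t) = \det(1 - X + tX) \in R'[t]$. The identity $(1 - X + tX)(1 - X + sX) = 1 - X + tsX$, which follows directly from $X^2 = X$, yields $D(t)D(s) = D(ts)$ in $R'[s,t]$. Writing $D(t) = \sum_{k=0}^n e_k t^k$ and comparing coefficients of $t^i s^j$ gives orthogonal idempotents $e_0,\dots,e_n$ with $\sum_k e_k = D(1) = 1$. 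Hence $R' \cong \prod_{k=0}^n R'_k$ with $R'_k = e_k R'$. Over $R'_k$ one has $\det(1 - X + tX) = t^k$, and $\Ima(X), \Ker(X) \subseteq (R'_k)^n$ are complementary direct summands of constant ranks $k$ and $n-k$.

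For the cover of $\Spec R'_k$, I would identify it with the open subscheme of $\mathrm{Gr}(k, n)_R \times_R \mathrm{Gr}(n-k, n)_R$ parametrising transverse pairs. Concretely, for each pair of subsets $S, T \subseteq \{1, \dots, n\}$ with $|S| = k$ and $|T| = n - k$, consider the open locus $U_{S,T}$ on which $\Ima(X)$ lies in the standard affine chart of $\mathrm{Gr}(k,n)$ corresponding to $S$ (projecting isomorphically onto the coordinate subspace $R^S$) and $\Ker(X)$ lies in the chart for $T$. On $U_{S,T}$ each of these subspaces is parametrised by a free matrix, giving $2k(n-k)$ algebraically independent parameters over $R$; the entries of the idempotent $X$ (the projection onto $\Ima(X)$ along $\Ker(X)$) then become polynomial expressions in these parameters after inverting the chart minors. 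The coordinate ring of $U_{S,T}$ is thus a localisation of a polynomial ring over $R$, hence reduced. The loci $U_{S,T}$ cover $\Spec R'_k$ because over any field every $k$-dimensional subspace of $n$-space is complementary to some coordinate $(n-k)$-subspace.

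The main obstacle will be the explicit parametrisation on each chart: writing the entries of $X$ as polynomial expressions in the $2k(n-k)$ Grassmannian parameters (after inverting the chart minors) and verifying that the defining relations $(X^2 - X)_{ij} = 0$ of $R'_k$ are automatically satisfied on the chart, so no hidden relations survive. In effect one must carry out the identification of $\Spec R'_k$ with the homogeneous space $\mathrm{GL}_{n,R} / (\mathrm{GL}_k \times \mathrm{GL}_{n-k})_R$, which is standard but requires careful bookkeeping.
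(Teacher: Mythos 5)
Your strategy is sound and genuinely different from the paper's. The paper's proof is much shorter: it observes that $R'$ represents the functor of $n\times n$ idempotent matrices, that idempotents lift along square-zero ideals (so $\Hom(R',C)\to\Hom(R',C/I)$ is surjective for $I^2=0$), and concludes that $\Spec R'\to\Spec R$ is smooth by the infinitesimal criterion; smooth morphisms preserve reducedness. Your argument proves the same underlying smoothness by exhibiting explicit Zariski charts: the determinant trick $D(t)D(s)=D(ts)$ correctly splits $R'$ into rank components, and the identification of each $U_{S,T}$ with a localisation of a polynomial ring over $R$ is the statement that the rank-$k$ idempotent scheme is Zariski-locally the variety of transverse pairs, i.e.\ the homogeneous space $\GL_n/(\GL_k\times\GL_{n-k})$. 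What the paper's route buys is that it bypasses entirely the chart bookkeeping you flag as the ``main obstacle''; what your route buys is an explicit local description of $\Spec R'$ and the rank stratification, which the lifting argument does not provide.

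That said, you have deferred rather than completed the decisive step, and two points there need real care. First, ``$\Ima(X)$ lies in the chart for $S$'' must be realised as the non-vanishing of an explicit element of $R'_k$ (e.g.\ $\det X_{S,S}$, which forces the projection $\Ima X\to C^S$ to be a surjection of rank-$k$ projectives, hence an isomorphism), and the covering claim then needs an argument on the idempotent side: for a rank-$k$ idempotent over a field the sum of all principal $k\times k$ minors equals $1$ (read off from $\det(tI-X)=t^{n-k}(t-1)^k$), so some such minor is nonzero; similarly for $1-X$. Second, the isomorphism between the chart ring and the localised polynomial ring must be checked in both directions: that the projection built from the Grassmannian parameters is idempotent and recovers the parameters is easy, but you also need that in $R'_k[1/d_{S,T}]$ the universal idempotent equals the projection onto its image along its kernel reconstructed from the extracted chart coordinates — this is a formal consequence of $X^2=X$, but it is exactly the ``no hidden relations'' verification and it is where the work lies. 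Neither point is an error, but as written the proof is a program rather than a proof; if you want to avoid the bookkeeping, the square-zero lifting of idempotents is the efficient way in.
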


\begin{proof}
$R'$ is the coordinate ring of the scheme of $n\times n$ idempotent matrices over $R$. That is, for any commutative $R$-algebra $C$, there is a bijection between set $\Hom(R',C)$ of $R$-algebra homomorphisms $R'\to C$ and the set of idempotent matrices in $M_n(C)$.

Now if $I$ is a square-zero ideal in $C$, then the map $\Hom(R',C)\to \Hom(R',C/I)$ is surjective, since any idempotent in $M_n(C/I) \cong M_n(C)/M_n(I)$ lifts modulo the square-zero ideal $M_n(I)$ to an idempotent in $M_n(C)$. 

It follows that the morphism of schemes $\Spec R'\to \Spec R$ is smooth, see \cite[Corollaire 4.6]{DG}. Since $R$ is reduced, it follows that $R'$ is reduced \cite[Corollaire 6.3]{DG}.
\end{proof}

We write $J(R)$ for the Jacobson radical of $R$.

\begin{lemma}
\label{l:liftprojs}
Let $R$ be a commutative ring and let $(P_\lambda)_{\lambda\in\Lambda}$ be a family of finitely generated projective $R$-modules. Then there is a commutative ring $R'$, a homomorphism $\theta:R'\to R$ and finitely generated projective $R'$-modules $(P_\lambda')_{\lambda\in\Lambda}$ such that
\begin{itemize}
\item[(i)]
$R'$ is reduced and $\theta$ is surjective with kernel contained in $J(R')$.
\item[(ii)]
$P_\lambda \cong (P_\lambda')^R$ for all $\lambda\in\Lambda$.
\end{itemize}
\end{lemma}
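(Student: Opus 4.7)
The plan is to realize each $P_\lambda$ as the image of an idempotent matrix over $R$ and then construct $R'$ as a localization of a universal reduced ring carrying lifted copies of those idempotents. For each $\lambda$, I would first choose $n_\lambda\ge 1$ and an idempotent matrix $e^\lambda=(e^\lambda_{ij})\in M_{n_\lambda}(R)$ with $P_\lambda\cong \Ima(e^\lambda)$; this is possible because $P_\lambda$ is finitely generated projective.

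Next I would build a reduced ring $R_0$ together with a surjection $\pi:R_0\to R$. Starting from the polynomial ring $A=\Z[y_r:r\in R]$, which is reduced and surjects onto $R$ via $y_r\mapsto r$, I would iteratively apply Lemma \ref{l:idempscheme} to adjoin, for each $\lambda$, the coordinate ring of the scheme of $n_\lambda\times n_\lambda$ idempotent matrices. For infinite $\Lambda$ one passes to the filtered colimit over finite subsets, using that a filtered colimit of reduced rings is reduced. The result is a reduced ring
\[
R_0 = A\bigl[x^\lambda_{ij}:\lambda\in\Lambda,\ 1\le i,j\le n_\lambda\bigr]\Big/\Bigl(\textstyle\sum_\ell x^\lambda_{i\ell}x^\lambda_{\ell j}-x^\lambda_{ij}\Bigr),
\]
and the assignment $y_r\mapsto r$, $x^\lambda_{ij}\mapsto e^\lambda_{ij}$ defines a surjection $\pi:R_0\to R$.

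To force the kernel of the lifting map into the Jacobson radical, I would then localize. Let $S=\{s\in R_0:\pi(s)\in R^\times\}$, set $R'=S^{-1}R_0$, and let $\theta:R'\to R$ be induced by $\pi$. Since localization preserves reducedness, $R'$ is reduced, and $\theta$ is surjective because $\pi$ is. If $x/s\in\ker\theta$ and $y/t\in R'$, then $\pi(st-xy)=\pi(s)\pi(t)$ is a unit of $R$, so $st-xy\in S$, and hence $1-(x/s)(y/t)=(st-xy)/(st)$ is a unit of $R'$; this gives $\ker\theta\subseteq J(R')$, completing~(i). For~(ii), let $X^\lambda=(x^\lambda_{ij})\in M_{n_\lambda}(R')$, which is idempotent by construction, and set $P'_\lambda=\Ima(X^\lambda)\subseteq (R')^{n_\lambda}$; this is a finitely generated projective $R'$-module, and base change along $\theta$ carries $X^\lambda$ to $e^\lambda$, so $(P'_\lambda)^R\cong \Ima(e^\lambda)\cong P_\lambda$.

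The only substantive ingredient is Lemma \ref{l:idempscheme}; once reducedness of the idempotent scheme is granted, the surjectivity and Jacobson-radical conditions fall out of the localization at the preimage of $R^\times$. The main subtle point to remember is that one must enlarge the universal construction so that $\pi$ is surjective in the first place (for example by including a polynomial variable $y_r$ for every element $r\in R$) before localizing; otherwise the map into $R$ need not be surjective and the argument collapses.
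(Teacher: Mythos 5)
Your proposal is correct and follows essentially the same route as the paper's proof: present each $P_\lambda$ via an idempotent matrix, adjoin a polynomial variable for every element of $R$ plus the universal idempotent coordinates, invoke Lemma~\ref{l:idempscheme} iteratively (with a colimit for infinite $\Lambda$), and localize at the preimage of the units to push the kernel into the Jacobson radical. The only cosmetic difference is that you verify the radical condition via $1-(x/s)(y/t)$ where the paper uses $1+xy$, which is the same criterion.
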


\begin{proof}
Let $A = \Z[x_r : r\in R]$ be the polynomial ring over $\Z$ with indeterminates indexed by the elements of $R$. Then $A$ is a domain and there is a surjective homomorphism $\theta_A:A\to R$ sending each $x_r$ to $r$.

For each $\lambda\in\Lambda$, write $P_\lambda$ as a direct summand of a free $R$-module of rank $n_\lambda$ and let $e^\lambda \in M_{n_\lambda}(R)$ be the idempotent matrix corresponding to the projection onto $P_\lambda$. Let
\[
B = A[x_{ij}^\lambda : \lambda\in\Lambda, 1\le i,j\le n_\lambda] / \left( \sum_{\ell =1}^{n_\lambda} x^\lambda_{i\ell }x^\lambda_{\ell j} - x^\lambda_{ij} : \lambda\in\Lambda, 1\le i,j\le n_\lambda \right).
\]
By iterated use of Lemma~\ref{l:idempscheme} the ring obtained by adjoining the indeterminates $x^\lambda_{ij}$ and corresponding relations for a finite subset of $\Lambda$ is reduced, and it follows that $B$ is also reduced. There is a homomorphism $\theta_B:B\to R$ extending $\theta_A$ and sending $x^\lambda_{ij}$ to the $(i,j)$ entry in the matrix $e^\lambda$. Let
\[
S = \{ b\in B : \text{$\theta_B(b)$ is invertible} \},
\]
a multiplicative subset of $B$. We set $R'$ to be the localization $B_S$, which is reduced. The homomorphism $\theta_B$ extends to a surjective homomorphism $\theta:R'\to R$.

Let $x\in \Ker \theta$. If $y\in R'$, then $xy\in \Ker\theta$. Write $xy = bs^{-1}$ with $b\in B$ and $s\in S$. Then $\theta(b)=0$, so $s+b\in S$. Thus $1+xy = (s+b)s^{-1}$ is invertible in $R'$. Since this is true for all $y$, it follows that $x$ is in $J(R')$.

For each $\lambda\in \Lambda$, the matrix with entries $x^\lambda_{ij}$ is an idempotent matrix in $M_{n_\lambda}(R')$ which is sent to $e^\lambda$ by $\theta$, and the image of the corresponding endomorphism of $(R')^{n_\lambda}$, is a finitely generated projective $R'$-module $P'_\lambda$ with $(P_\lambda')^R \cong P_\lambda$.
\end{proof}

Of course by taking a suitable family $(P_\lambda)_{\lambda\in\Lambda}$ in the lemma, we could ensure that all finitely generated projective $R$-modules lift to $R'$.

\begin{lemma}
\label{l:nakprops}
Suppose that $P$ and $P'$ are finitely generated projective $R$-modules and that $\theta:R\to S$ is a surjective homomorphism with $\Ker \theta\subseteq J(R)$.
\begin{itemize}
\item[(i)]
If $X$ is a finitely generated $R$-module and $X^S=0$, then $X=0$.
\item[(ii)]
If $P^S$ has constant rank $n$, then so does $P$.
\item[(iii)]
Any $S$-module map $P^S \to (P')^S$ lifts to an $R$-module map $P\to P'$.
\item[(iv)]
If $P^S \cong (P')^S$, then $P\cong P'$.
\end{itemize}
\end{lemma}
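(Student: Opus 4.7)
For (i), the plan is to apply Nakayama's lemma directly. Since $X^S = X/(\Ker\theta)X$, the hypothesis says $(\Ker\theta)X = X$; as $\Ker\theta \subseteq J(R)$ and $X$ is finitely generated, Nakayama forces $X=0$.

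For (ii), I would exploit that since $\Ker\theta \subseteq J(R)$, every maximal ideal of $R$ contains $\Ker\theta$, so the maximal ideals of $R$ and $S$ are in bijection via $\mathfrak{m}\leftrightarrow \mathfrak{m}/\Ker\theta$, with matching residue fields $R/\mathfrak{m} \cong S/(\mathfrak{m}/\Ker\theta)$. Then $P/\mathfrak{m}P \cong P^S/(\mathfrak{m}/\Ker\theta)P^S$, so the constant rank condition transfers at every maximal ideal, which suffices for a finitely generated projective module.

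For (iii), I will use the standard natural isomorphism $\Hom_R(P,P')\otimes_R S \cong \Hom_S(P^S,(P')^S)$ valid when $P$ is finitely generated projective (check for $P=R$, then for $P=R^n$, then pass to summands via an idempotent $e\in M_n(R)$). The reduction map $\Hom_R(P,P') \to \Hom_R(P,P')\otimes_R S$ is tautologically surjective, and combined with the isomorphism above gives the desired lifting.

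For (iv), the main point is that $(\Ker\theta)\End_R(P)$ is contained in $J(\End_R(P))$; this is the step I expect to need the most care. Writing $P = eR^n$ for an idempotent $e\in M_n(R)$, one has $\End_R(P) = eM_n(R)e$, and the standard identities $J(M_n(R)) = M_n(J(R))$ together with $J(eAe)=eJ(A)e$ give $J(\End_R(P)) = e M_n(J(R)) e \supseteq (\Ker\theta)\End_R(P)$. Given an isomorphism $\phi:P^S\to (P')^S$, lift $\phi$ and $\phi^{-1}$ to maps $f:P\to P'$ and $g:P'\to P$ using (iii). Then $(1-gf)^S = 0$, so $1-gf\in(\Ker\theta)\End_R(P) \subseteq J(\End_R(P))$, whence $gf$ is an automorphism of $P$; symmetrically $fg$ is an automorphism of $P'$. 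Thus $f$ is both a split monomorphism and a split epimorphism, hence an isomorphism, so $P\cong P'$.
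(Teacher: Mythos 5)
Your proof is correct. Parts (i)--(iii) match the paper's argument in substance: (i) is Nakayama, (ii) rests on the observation that every maximal ideal of $R$ contains $\Ker\theta$ so that $R\to R/\mathfrak{m}$ factors through $S$, and (iii) amounts to the surjectivity of $\Hom_R(P,P')\to\Hom_S(P^S,(P')^S)$ (the paper phrases this as lifting $P\to (P')^S$ through the surjection $P'\to (P')^S$ using projectivity of $P$, which sidesteps the base-change isomorphism you invoke, though that isomorphism is standard for $P$ finitely generated projective). For (iv) you take a genuinely different route: you lift both $\phi$ and $\phi^{-1}$ and argue via $1-gf\in(\Ker\theta)\End_R(P)\subseteq J(\End_R(P))$, using $J(M_n(R))=M_n(J(R))$ and $J(eAe)=eJ(A)e$. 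This is valid, and you correctly flag the inclusion into the radical of the endomorphism ring as the delicate point. The paper instead lifts only $\phi$ to $f:P\to P'$, applies (i) to $\Coker f$ to get surjectivity, notes the resulting short exact sequence splits because $P'$ is projective (so it stays exact after $-\otimes_R S$ and $\Ker f$ is a finitely generated direct summand of $P$), and applies (i) again to $\Ker f$. The paper's version is slightly more economical and reuses (i) directly; yours is the standard ``noncommutative Nakayama'' argument and avoids having to check that $\Ker f$ is finitely generated. Both are complete.
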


\begin{proof}
(i) is Nakayama's Lemma. For (ii) it suffices to check that the free $R_{\mathfrak{m}}$-module $P_{\mathfrak{m}}$ has rank $n$ for all maximal ideals $\mathfrak{m}$ in $R$, but this holds since the homomorphism $R\to R/\mathfrak{m}$ factors through $S$. Part (iii) is clear, since $P$ is projective and the mapping 
\[
P' \to (P')^S \cong P'/(\Ker\theta)P'
\]
is surjective. For (iv), an isomorphism $P^S \to (P')^S$ lifts to a homomorphism $f:P \to P'$. Now $\Coker(f)^S=0$, so $f$ is surjective by (i). Then the exact sequence $0\to \Ker f\to P\to P'\to 0$ splits, so remains exact on inducing to $S$. Thus $(\Ker f)^S=0$, so $f$ is a monomorphism by (i).
\end{proof}

\section{Lattices}
If $X$ is an $RQ$-module and $R\to S$ is a homomorphism of commutative rings, then $X^S$ is naturally an $SQ$-module, and if $X$ is an $RQ$-lattice, then $X^S$ is an $SQ$-lattice. Lemma~1 of \cite{CBrirq} generalizes immediately.

\begin{lemma}
\label{l:extprop}
Suppose $X$ is an $RQ$-lattice and $Y$ is an $RQ$-module. The following hold.
\begin{itemize}
\item[(i)]
There is a resolution $0\to P_1\to P_0\to X\to 0$ of $X$ by finitely generated projective $RQ$-modules. In particular, $\pd X \le 1$
\item[(ii)]
If $Y$ is finitely generated as an $R$-module, then so is $\Ext^1_{RQ}(X,Y)$.
\item[(iii)]
$\Ext^1_{RQ}(X,Y)^S \cong \Ext^1_{SQ}(X^S,Y^S)$ for any homomorphism $R\to S$ and any $RQ$-module $Y$.
In particular, if $X$ is a rigid $RQ$-lattice, then $X^S$ is a rigid $SQ$-lattice.
\item[(iv)]
If $X^K$ is rigid for all homomorphisms $R\to K$ with $K$ an algebraically closed field, then $X$ is rigid.
\item[(v)]
If $P,P'$ are finitely generated projective $R$-modules, then
\begin{align*}
\Hom_{RQ}(X\otimes_R P,Y\otimes_R P') &\cong \Hom_{RQ}(X,Y)\otimes_R \Hom_R(P,P'),\text{ and}
\\
\Ext^1_{RQ}(X\otimes_R P,Y\otimes_R P') &\cong \Ext^1_{RQ}(X,Y)\otimes_R \Hom_R(P,P').
\end{align*}
\end{itemize}
\end{lemma}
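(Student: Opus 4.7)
The plan is to treat each part in order, essentially by reducing everything to the standard projective resolution of an $RQ$-module.

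For part (i), I would use the standard ``universal'' resolution associated to a path algebra: for any $RQ$-module $X$ there is a short exact sequence $0\to \bigoplus_{a\in Q_1} RQe_{h(a)}\otimes_R e_{t(a)}X \to \bigoplus_{i\in Q_0} RQe_i \otimes_R e_iX \to X\to 0$, where the right map sends $p\otimes x\mapsto px$ and the left is built from the arrows in the obvious way. When $X$ is a lattice, each $e_iX$ is finitely generated projective over $R$, so each $RQe_i\otimes_R e_iX$ and $RQe_{h(a)}\otimes_R e_{t(a)}X$ is a finitely generated projective $RQ$-module. This shows $\pd X\le 1$. Part (ii) then follows by noting that $\Ext^1_{RQ}(X,Y)$ is a quotient of $\Hom_{RQ}\bigl(\bigoplus_a RQe_{h(a)}\otimes_R e_{t(a)}X,\,Y\bigr)\cong \bigoplus_a \Hom_R(e_{t(a)}X, e_{h(a)}Y)$, which is finitely generated over $R$ whenever $Y$ is.

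For part (iii), I observe that applying $S\otimes_R -$ to the resolution from (i) yields a short exact sequence of $SQ$-modules which is again a finite projective resolution of $X^S$ (exactness on the left uses that $X$ is $R$-flat, being a lattice; projectivity of the terms is preserved since for $L$ finitely generated projective over $R$ we have $(RQe_i\otimes_R L)^S\cong SQe_i\otimes_S L^S$). Moreover, for the projectives appearing, $\Hom_{RQ}(RQe_i\otimes_R L,\,Y)\otimes_R S \cong \Hom_R(L, e_iY)\otimes_R S \cong \Hom_S(L^S, e_iY^S) \cong \Hom_{SQ}(SQe_i\otimes_S L^S,\, Y^S)$, using that $L$ is finitely generated projective. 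Tensoring the complex $0\to \Hom_{RQ}(P_0,Y)\to \Hom_{RQ}(P_1,Y)\to 0$ with $S$ and comparing cokernels then gives the desired identification (for $\Hom$ one uses left exactness, which holds before tensoring). Part (iv) is then an immediate consequence of (ii), (iii) and Lemma~\ref{l:commring}(i): the finitely generated $R$-module $\Ext^1_{RQ}(X,X)$ vanishes after base change to every algebraically closed field, hence is zero.

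For part (v), the key observation is that for a finitely generated projective $R$-module $P$ and any $R$-module $M$, there is a natural isomorphism $\Hom_R(P, M)\cong P^*\otimes_R M$, and tensor-hom adjunction gives $\Hom_{RQ}(X\otimes_R P,\,Z)\cong \Hom_{RQ}(X,\,\Hom_R(P,Z))$ for any $RQ$-module $Z$. With $Z=Y\otimes_R P'$ this reduces the statement to showing $\Hom_{RQ}(X, Y\otimes_R N)\cong \Hom_{RQ}(X,Y)\otimes_R N$ (and similarly for $\Ext^1$) whenever $N$ is finitely generated projective over $R$. Using the resolution from (i), this last identity is immediate term-by-term (since $\Hom_R(L,\,e_iY\otimes_R N)\cong \Hom_R(L, e_iY)\otimes_R N$ for $L$ finitely generated projective), and propagates to $\Hom$ and $\Ext^1$ of $X$ because $-\otimes_R N$ is exact. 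Taking $N=\Hom_R(P,P')=P^*\otimes_R P'$ delivers both isomorphisms. The main obstacle here is purely bookkeeping — making sure that each of the natural maps is in fact an isomorphism in the category at hand — rather than any serious homological issue.
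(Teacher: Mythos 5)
Your proposal is correct and follows essentially the same route as the paper: the standard two-term projective resolution of $X$ coming from the tensor-algebra structure of $RQ$, followed by term-by-term base-change and tensor arguments (your use of tensor-hom adjunction in (v) is just a slightly repackaged version of the paper's observation that tensoring the resolution with $P$ gives a projective resolution of $X\otimes_R P$). One caution: the parenthetical in your part (iii) suggesting that left exactness also yields the analogous identification for $\Hom$ is not justified (kernels do not commute with $-\otimes_R S$ in general) and is not claimed in the lemma — the $\Hom$ base-change is only established later, for rigid lattices, where the four-term $\Hom$--$\Ext$ sequence splits.
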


\begin{proof}
(i) Letting $S$ be the $R$-subalgebra of $RQ$ with basis the trivial paths $e_i$ and $B$ be the free $R$-submodule of $RQ$ with basis the arrows, the path algebra $RQ$ is isomorphic to the tensor algebra of $B$ over $S$, so there is a standard resolution
\[
0 \to RQ \otimes_S B \otimes_S RQ \to RQ \otimes_S RQ \to RQ \to 0,
\]
or equivalently
\[
0 \to \bigoplus_{a\in Q_1} RQ e_{h(a)} \otimes_R e_{t(a)} RQ \to \bigoplus_{i\in Q_0} RQ e_i \otimes_R e_i R \to RQ \to 0.
\]
Tensoring with the $RQ$-module $X$ gives
\[
0 = \Tor_1^{RQ}(RQ,X) \to \bigoplus_{a\in Q_1} RQ e_{h(a)} \otimes_R e_{t(a)} X \to \bigoplus_{i\in Q_0} RQ e_i \otimes_R e_i X \to X \to 0.
\]
and this is a resolution of $X$ by finitely generated projective $RQ$-modules. 
Now (ii) follows directly from (i).
For (iii), since $X$ is projective over $R$, the induced sequence
\[
0\to P_1^S\to P_0^S \to X^S\to 0
\]
is exact, so a resolution of $X^S$ by projective $SQ$-modules. 
Now if $P$ is a finitely generated projective $RQ$-module, then
\[
\Hom_{RQ}(P,Y)^S \cong \Hom_{SQ}(P^S,Y^S).
\]
Thus we get a commutative diagram with exact rows
\[
\begin{CD}
\Hom_{RQ}(P_0,Y)^S @>>> \Hom_{RQ}(P_1,Y)^S @>>> \Ext^1_{RQ}(X,Y)^S @>>> 0 \\
@VVV @VVV \\
\Hom_{SQ}(P_0^S,Y^S) @>>> \Hom_{SQ}(P_1^S,Y^S) @>>> \Ext^1_{SQ}(X^S,Y^S) @>>> 0 
\end{CD}
\]
in which the vertical maps are isomorphisms, and hence an isomorphism
\[
\Ext^1_{RQ}(X,Y)^S \cong \Ext^1_{SQ}(X^S,Y^S).
\]
Now (iv) follows from (ii), (iii) and Lemma~\ref{l:commring}(i). For part (v), note that $P_i\otimes_R P$ is a finitely generated projective $RQ$-module, so the tensor product of the resolution in (i) with $P$ is a projective resolution of $X\otimes_R P$. The assertion follows.
\end{proof}

\begin{lemma}
\label{l:liftlatts}
Given $RQ$-lattices $X_1,\dots,X_n$, there is commutative ring $R'$, a homomorphism $\theta:R'\to R$ 
and $R'Q$-lattices $X_1',\dots,X_n'$ with the following properties.
\begin{itemize}
\item[(i)]
$R'$ is reduced and $\theta$ is surjective with kernel contained in $J(R')$.
\item[(ii)]
$X_i \cong (X_i')^R$ for all $i$.
\item[(iii)]
$X_i'$ has pointwise constant rank if and only if $X_i$ has pointwise constant rank, and if so, they have the same rank vector.
\item[(iv)]
$\Ext^1_{RQ}(X_i,X_j)=0$ if and only if $\Ext^1_{R'Q}(X'_i,X'_j)=0$. In particular $X_i$ is rigid if and only if $X_i'$ is rigid.
\end{itemize}
\end{lemma}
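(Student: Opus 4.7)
The plan is to apply Lemma~\ref{l:liftprojs} to the family of finitely generated projective $R$-modules $\{e_i X_j : i \in Q_0,\ 1 \le j \le n\}$. This immediately yields a reduced ring $R'$ with a surjection $\theta:R'\to R$ whose kernel lies in $J(R')$, together with finitely generated projective $R'$-modules $P'_{j,i}$ and isomorphisms $(P'_{j,i})^R \cong e_i X_j$, establishing (i) and providing the vertex-level data needed to build the $X'_j$.

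The main work is then to promote these vertex pieces to $R'Q$-lattices. For each arrow $a$ and each $j$, the multiplication-by-$a$ map $e_{t(a)}X_j \to e_{h(a)}X_j$ transports via the fixed isomorphisms to an $R$-linear map $(P'_{j,t(a)})^R \to (P'_{j,h(a)})^R$. Because $\ker\theta\subseteq J(R')$, Lemma~\ref{l:nakprops}(iii) lifts this to an $R'$-linear map $P'_{j,t(a)} \to P'_{j,h(a)}$, and declaring these to be the arrow actions defines an $R'Q$-lattice $X'_j$ with $e_i X'_j = P'_{j,i}$ and $(X'_j)^R \cong X_j$, giving (ii). This is the key step; once it is in place, the remaining properties follow almost formally from material already developed.

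For (iii), $e_i X'_j = P'_{j,i}$ has constant rank $\alpha_i$ if and only if $(P'_{j,i})^R = e_i X_j$ does, the forward direction coming from the observation at the start of Section~2 that base change preserves constant rank and the reverse from Lemma~\ref{l:nakprops}(ii); the two ranks agree, so the rank vectors of $X_j$ and $X'_j$ coincide whenever either is defined. For (iv), Lemma~\ref{l:extprop}(ii) ensures that $\Ext^1_{R'Q}(X'_i,X'_j)$ is finitely generated over $R'$, while Lemma~\ref{l:extprop}(iii) yields an isomorphism $\Ext^1_{R'Q}(X'_i,X'_j)^R \cong \Ext^1_{RQ}(X_i,X_j)$. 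Thus vanishing over $R'$ implies vanishing over $R$ by base change, and the converse follows from Nakayama's Lemma (Lemma~\ref{l:nakprops}(i)) together with $\ker\theta\subseteq J(R')$.
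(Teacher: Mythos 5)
Your proof is correct and follows essentially the same route as the paper: apply Lemma~\ref{l:liftprojs} to the vertex modules $e_iX_j$, lift the arrow maps using Lemma~\ref{l:nakprops}(iii), and deduce (iii) and (iv) from Lemma~\ref{l:nakprops} and Lemma~\ref{l:extprop}(iii). The only difference is that you spell out the finite generation of $\Ext^1$ (via Lemma~\ref{l:extprop}(ii)) needed for the Nakayama step, which the paper leaves implicit.
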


\begin{proof}
We apply Lemma \ref{l:liftprojs} to the ring $R$ and the finitely generated projective modules $X_{ij} = e_j X_i$ for $j\in Q_0$ and $i=1,\dots,n$, to obtain a ring $R'$ and homomorphism $\theta$ satisfying (i),
and finitely generated projective $R'$-modules $X'_{ij}$.

The lattice $X_i$ corresponds to a representation of $Q$ by means of the finitely generated projective $R$-modules $e_j X_i$ and an $R$-module homomorphism $e_{t(a)} X_i \to e_{h(a)} X_i$ for each arrow $a\in Q_1$. Now the $e_j X_i$ lift to modules $X'_{ij}$, and by Lemma~\ref{l:nakprops}(iii), the homomorphisms lift to homomorphisms $X'_{i,t(a)}\to X'_{i,h(a)}$. These give $R'Q$-lattices $X'_i$ satisfying~(ii).

Now (iii) follows from Lemma~\ref{l:nakprops}(ii) and part (iv) from Lemmas~\ref{l:nakprops}(i) and~\ref{l:extprop}(iii). (The idea of lifting rigid lattices modulo an ideal contained in the Jacobson radical, we learnt from \cite{GLS}.)
\end{proof}

Over an algebraically closed field $K$, there is a bijection between the isomorphism classes of representations $X$ of $Q$ of dimension vector $\alpha$ and orbits of a group $\GL(\alpha)$ acting on an affine space $\Rep(Q,\alpha)$. Moreover $X$ is rigid if and only if the corresponding orbit $O_X$ is open. Since the affine space it irreducible, it follows that there is at most one rigid representation of dimension $\alpha$, up to isomorphism. 

The general dimension of $\Hom(X,Y)$ and $\Ext^1(X,Y)$ for $X$ of dimension $\alpha$ and $Y$ of dimension $\beta$ is denoted $\hom(\alpha,\beta)$ and $\ext(\alpha,\beta)$. In particular these are the dimensions for $X$ and $Y$ rigid. Based on work of Schofield \cite{S}, it is proved in \cite{CBsgrq} that $\hom(\alpha,\beta)$ and $\ext(\alpha,\beta)$ do not depend on the algebraically closed field~$K$.

\section{Proofs of the theorems}
Here is a more detailed version of Theorem A.

\begin{theorem}
\label{t:homprop}
Let $X$ and $Y$ be rigid $RQ$-lattices.
\begin{itemize}
\item[(i)]
$\Ext^1_{RQ}(X,Y)$ and $\Hom_{RQ}(X,Y)$ are finitely generated projective $R$-modules. 
\item[(ii)]
$\Hom_{RQ}(X,Y)^S \cong \Hom_{SQ}(X^S,Y^S)$ for all homomorphisms $R\to S$. 
\item[(iii)]
If $X$ and $Y$ have rank vectors $\alpha$ and $\beta$, then $\Ext^1_{RQ}(X,Y)$ and $\Hom_{RQ}(X,Y)$ have constant ranks $\ext(\alpha,\beta)$ and $\hom(\alpha,\beta)$.
\end{itemize}
\end{theorem}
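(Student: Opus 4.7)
The plan is to first establish (i) and (ii) in the case $R$ is reduced, via the locally constant dimension criterion of Lemma~\ref{l:redring}(i), and then deduce the general case by lifting to a reduced ring using Lemma~\ref{l:liftlatts}. The starting point in both settings is the four-term exact sequence
\[
0 \to \Hom_{RQ}(X,Y) \to \bigoplus_{i \in Q_0} \Hom_R(e_iX, e_iY) \to \bigoplus_{a \in Q_1} \Hom_R(e_{t(a)}X, e_{h(a)}Y) \to \Ext^1_{RQ}(X,Y) \to 0
\]
obtained by applying $\Hom_{RQ}(-,Y)$ to the standard resolution of Lemma~\ref{l:extprop}(i); its two middle terms are manifestly finitely generated projective $R$-modules.

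Assuming $R$ reduced, I would first prove $\Ext^1_{RQ}(X,Y)$ projective by verifying the criterion of Lemma~\ref{l:redring}(i). For any $R \to K$ with $K$ algebraically closed, Lemma~\ref{l:extprop}(iii) identifies $\Ext^1_{RQ}(X,Y)^K$ with $\Ext^1_{KQ}(X^K,Y^K)$; here $X^K$ and $Y^K$ are rigid with dimension vectors given by the ranks of the projective modules $e_iX$ and $e_iY$ at the prime below $R \to K$, and those ranks are locally constant on $\Spec R$. By the remark preceding the theorem, for rigid $X^K,Y^K$ of dimensions $\alpha,\beta$ one has $\dim_K \Ext^1_{KQ}(X^K,Y^K) = \ext(\alpha,\beta)$, which is field-independent, so $\mathfrak{p} \mapsto \dim_{\overline{k(\mathfrak{p})}} \Ext^1_{RQ}(X,Y)^{\overline{k(\mathfrak{p})}}$ is locally constant and $\Ext^1_{RQ}(X,Y)$ is projective.

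Once $\Ext^1_{RQ}(X,Y)$ is projective, the four-term sequence splits at the image $I$ of the middle map into two split short exact sequences: the right half splits because its cokernel $\Ext^1_{RQ}(X,Y)$ is projective, forcing $I$ to be projective, whereupon the left half $0 \to \Hom_{RQ}(X,Y) \to \bigoplus_i \Hom_R(e_iX, e_iY) \to I \to 0$ also splits and $\Hom_{RQ}(X,Y)$ is projective. Tensoring both split sequences with $S$ preserves exactness, and the canonical identifications $\Hom_R(e_iX,e_iY)^S \cong \Hom_S(e_iX^S, e_iY^S)$ exhibit $\Hom_{RQ}(X,Y)^S$ as the analogous kernel for $X^S, Y^S$, namely $\Hom_{SQ}(X^S,Y^S)$. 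This proves (i) and (ii) when $R$ is reduced.

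For general $R$, I would invoke Lemma~\ref{l:liftlatts} to produce a reduced ring $R'$, a surjection $\theta : R' \to R$ with $\Ker\theta \subseteq J(R')$, and rigid $R'Q$-lattices $X',Y'$ lifting $X,Y$. Lemma~\ref{l:extprop}(iii) and the already proved reduced case of (ii) applied to $\theta$ identify $\Ext^1_{RQ}(X,Y)$ and $\Hom_{RQ}(X,Y)$ with the base changes along $\theta$ of the projective $R'$-modules $\Ext^1_{R'Q}(X',Y')$ and $\Hom_{R'Q}(X',Y')$, yielding (i); rerunning the splitting argument directly over $R$, now that $\Ext^1_{RQ}(X,Y)$ is projective, then yields (ii) in general. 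For (iii), applying Lemma~\ref{l:extprop}(iii) along each $R \to K$ shows $\dim_K \Ext^1_{RQ}(X,Y)^K = \ext(\alpha,\beta)$ and $\dim_K \Hom_{RQ}(X,Y)^K = \hom(\alpha,\beta)$ for all algebraically closed $K$, which for finitely generated projective $R$-modules pins down the constant ranks. I expect the main obstacle to be the projectivity of $\Ext^1$ over a reduced base, as this is where Schofield's field-independence of $\ext(\alpha,\beta)$ is essential; everything downstream is formal splitting or base-change manipulation.
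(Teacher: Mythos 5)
Your proposal is correct and follows essentially the same route as the paper: reduce to the case of a reduced base ring, use the field-independence of $\ext(\alpha,\beta)$ together with Lemma~\ref{l:redring}(i) to get projectivity of $\Ext^1$, split the four-term sequence coming from the standard resolution to handle $\Hom$ and base change, and lift to a reduced ring via Lemma~\ref{l:liftlatts} for general $R$. The only cosmetic slip is citing Lemma~\ref{l:extprop}(iii) for the $\Hom$ part of (iii), where you really need your already-established part (ii); the argument is otherwise the paper's.
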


\begin{proof}
We prove the assertions first in case $R$ is reduced. Since $X$ is a lattice, the mapping $\Spec R\to \N^{Q_0}$, $\mathfrak{p}\mapsto \dimv X^{\overline{k(\mathfrak{p})}}$ is locally constant by Lemma~\ref{l:redring}(i). Similarly for $Y$. Thus by the remarks after Lemma~\ref{l:liftlatts}, the mapping 
\[
\Spec R\to \N, \quad 
\mathfrak{p}\mapsto \dim_{\overline{k(\mathfrak{p})}} \Ext^1_{\overline{k(\mathfrak{p})} Q}(X^{\overline{k(\mathfrak{p})}},Y^{\overline{k(\mathfrak{p})}})
= \dim_{\overline{k(\mathfrak{p})}} \Ext^1_{RQ}(X,Y)^{\overline{k(\mathfrak{p})}}
\]
is also locally constant, where the equality is given by Lemma~\ref{l:extprop}(iii). Now $\Ext^1_{RQ}(X,Y)$ is finitely generated by Lemma~\ref{l:extprop}(ii) and projective by Lemma~\ref{l:redring}(i). Now the projective resolution of Lemma~\ref{l:extprop}(i) gives an exact sequence
\[
0\to \Hom_{RQ}(X,Y) \to \Hom_{RQ}(P_0,Y) \to \Hom_{RQ}(P_1,Y) \to \Ext^1_{RQ}(X,Y) \to 0.
\]
The last three terms are finitely generated projective $R$-modules, so this sequence splits, and the first term is also finitely generated projective over $R$, giving (i). Moreover the sequence remains exact under induction using a homomorphism $R\to S$, from which (ii) follows. In (iii) the condition on $\Ext^1$ holds by the same argument as (i), and the condition on $\Hom$ follows from (ii) and Lemma~\ref{l:redring}(ii).

Now suppose that $R$ is not reduced. By Lemma~\ref{l:liftlatts}, we can find a surjective homomorphism $R'\to R$, with with kernel contained in the radical of $R'$ and with $R'$ reduced, such that $X$ and $Y$ lift to $R'Q$-lattices $X'$ and $Y'$. Then (i), (ii) and (iii) hold for $X'$ and $Y'$, and so by (ii) for the ring $R'$, they follow for $X$ and $Y$.
\end{proof}

\begin{proof}[Proof of Theorem B]
Suppose there is an exceptional $RQ$-lattice $X$ of rank vector $\alpha$. By assumption the ring $R$ is non-zero, so there exists a homomorphism $R\to K$ with $K$ an algebraically closed field. Then $X^K$ is an exceptional $KQ$-module of dimension vector $\alpha$, so $\alpha$ is a real Schur root.

Conversely, if $\alpha$ is a real Schur root, then by \cite[Theorem 1]{CBrirq}, there is an exceptional $\Z Q$-lattice $X_0$ of rank vector $\alpha$, necessarily pointwise free, and then $X = X_0\otimes_\Z R$ is a pointwise free exceptional $R Q$-lattice of rank vector $\alpha$.

Now suppose that $Y$ is a rigid $RQ$-lattice of rank vector $\alpha$. Then $P = \Hom_{RQ}(X,Y)$ 
is a projective $R$-module of constant rank $\hom(\alpha,\alpha)=1$. There is an evaluation map
\[
f : X\otimes_R P \cong X\otimes \Hom_{RQ}(X,Y) \to Y.
\]
Moreover for any homomorphism $R\to K$, we can identify $f^K$ with the evaluation map
\[
X^K \otimes_K \Hom_{KQ}(X^K,Y^K) \to Y^K.
\]
Now if $K$ is an algebraically closed field, then $X^K$ and $Y^K$ are isomorphic, so $f^K$ is an isomorphism. Thus if $R$ is reduced, we can deduce that $f$ is an isomorphism.

On the other hand, if $R$ is not reduced, we can lift $Y$ to an $R'Q$-lattice $Y'$ with $R'$ reduced, using Lemma~\ref{l:liftlatts}. Moreover $X$ lifts to the exceptional pointwise free lattice $X' = X_0\otimes_{\Z}R'$. Since $R'$ is reduced, we have $Y' \cong X'\otimes_R P'$ for some finitely generated projective $R'$-module of constant rank 1, and then $Y \cong X\otimes_R P$ where $P = (P')^R$. It follows that $Y$ is exceptional. Moreover $P$ is uniquely determined, since $P \cong \Hom_{RQ}(X,Y)$.

Finally suppose that $Y$ is rigid and pointwise free of rank vector $\alpha$. We have
\[
R^{\alpha_i} \cong e_i Y \cong e_i X\otimes_R P \cong R^{\alpha_i}\otimes_R P \cong P^{\alpha_i}
\]
for all $i$.
Since $\alpha$ is a real root for $Q$, it is indivisible, that is, its components are coprime. Thus we can find $a_i,b_i\in\N$ such that
\[
1 + \sum_{i\in Q_0} a_i\alpha_i = \sum_{i\in Q_0} b_i\alpha_i.
\]
Then
\[
%P \oplus R^{\sum_i a_i\alpha_i} \cong 
P \oplus \bigoplus_{i\in Q_0} (R^{\alpha_i})^{a_i} \cong P \oplus \bigoplus_{i\in Q_0} (P^{\alpha_i})^{a_i} 
%\cong P^{1+\sum_i a_i\alpha_i} = P^{\sum_i b_i\alpha_i} 
\cong \bigoplus_{i\in Q_0} (P^{\alpha_i})^{b_i} \cong \bigoplus_{i\in Q_0} (R^{\alpha_i})^{b_i}.
%\cong R^{\sum_i b_i\alpha_i}.
\]
Thus $P$ is stably free. Now any stably free projective module of constant rank 1 for a commutative ring is free, see for example \cite[Theorem 4.11]{L}, so $P\cong R$. Thus $Y\cong X$. 
\end{proof}

%The following result is well known.

%\begin{lemma}
%\label{l:nooriented}
%The full subquiver of $Q$ on the support of any rigid dimension vector $\alpha$ has no oriented %cycles.
%\end{lemma}

%\begin{proof}
%If not, one can easily construct representations of $Q$ of dimension $\alpha$ over an algebraically closed field $K$ on which the trace of the oriented cycle is arbitrary. But the variety of representations has a dense orbit, so the trace must be constant. (This argument is already used in \cite[Theorem 1]{CBrirq} for real Schur roots.)
%\end{proof}

\begin{proof}[Proof of Theorem C]
Clearly a lattice with the indicated decomposition is rigid. For the converse, suppose that $X$ is a rigid lattice of pointwise constant rank. We fix temporarily a homomorphism $R\to K$ with $K$ an algebraically closed field. Then $X^K$ is rigid, so decomposes as a direct sum 
\[
X^K \cong M_1^{m_1} \oplus \dots \oplus M_r^{m_r}.
\]
with the $M_i$ pairwise non-isomorphic exceptional $KQ$-modules, $\Ext^1_{KQ}(M_i,M_j)=0$ for all $i,j$ and all $m_i>0$. By \cite[Corollary 4.2]{HR}, we can order the $M_i$ so that $(M_1,\dots,M_r)$ is an exceptional sequence, so $\Hom_{KQ}(M_i,M_j)=0$ for $i>j$. Let $\alpha^i = \dimv M_i$, a real Schur root, and let $X_i$ be the exceptional pointwise free $RQ$-lattice of rank vector $\alpha^i$. We have $\Hom_{RQ}(X_i,X_j)=0$ for $i>j$ since $\hom(\alpha^i,\alpha^j) = \dim \Hom_{KQ}(M_i,M_j) = 0$. Now $P_r = \Hom_{RQ}(X_r,X)$ is a finitely generated projective $R$-module of constant rank. Consider the evaluation map $\theta : X(\alpha^r) \otimes_R P_r \to X$ and let $C$ be its cokernel. For an arbitrary homomorphism $R\to K$ with $K$ an algebraically closed field (no longer the one fixed above), using Theorem~\ref{t:homprop}, we can identify $\theta^K$ with the evaluation map
\[
X(\alpha^r)^K \otimes_K \Hom_{KQ}(X_r^K,X^K) \to X^K
\]
but we know that
\[
X^K \cong (X_1^K)^{m_1} \oplus \dots \oplus (X_r^K)^{m_r}
\]
since both sides are rigid $KQ$-modules of the same dimension vector. Thus $\theta^K$ is the
inclusion of $(X_r^K)^{m_r}$ as a direct summand of $X^K$. 

Assuming that $R$ is reduced, it follows from Lemma~\ref{l:commring} that $\theta$ is a split monomorphism of $R$-modules. Thus $C$ is projective over $R$. Also
\[
C^K \cong \Coker(\theta^K) \cong (X_1^K)^{m_1} \oplus \dots \oplus (X_{r-1}^K)^{m_{r-1}}
\]
so $\Ext^1_{KQ}(C^K,C^K)=0$ and $\Ext^1_{KQ}(C^K,X_r^K)=0$. Thus $C$ is rigid and $\Ext^1_{RQ}(C,X_r)=0$. Thus $X \cong X_r \otimes_R P_r \oplus C$. Now the result follows by induction (on $r$ or on the total rank of $X$).

In case $R$ is not reduced, we have a homomorphism $R'\to R$ with $R$ reduced and $X$ lifts to a rigid $R'Q$-lattice. Moreover $X_1,\dots,X_r$ are obtained by inducing up rigid $\Z Q$-lattices, and we could equally well have induced them to $R'$ giving lifts $X_i'$ of $X_i$. Then the argument for reduced rings gives 
\[
X' \cong (X_1'\otimes_{R'} P'_1) \oplus \dots \oplus (X'_r \otimes_{R'} P'_r)
\]
and the isomorphism for $X$ follows.

Finally for uniqueness, suppose that we have two such decompositions
\[
X \cong (X_1 \otimes_R P_1) \oplus \dots \oplus (X_r\otimes_R P_r)
\cong
(Y_1 \otimes_R P'_1) \oplus \dots \oplus (Y_s\otimes_R P'_s).
\]
By considering a homomorphism $R\to K$ with $K$ algebraically closed we see that $r=s$ and we may assume that $X_i\cong Y_i$ for all $i$. Now
\[
P_r \cong \Hom_{RQ}(X_r,X) \cong P'_r
\]
and because of the decomposition of $X$, the evaluation map $X_r\otimes_R P_r \to X$ is split mono with cokernel isomorphic to
\[
(X_1 \otimes_R P_1) \oplus \dots \oplus (X_{r-1}\otimes_R P_{r-1})
\cong 
(X_1 \otimes_R P'_1) \oplus \dots \oplus (X_{r-1}\otimes_R P'_{r-1}),
\]
and then by induction $P_i\cong P_i'$ for $i=1,\dots,r-1$.
\end{proof}

For the proof of our main theorems, we did not need the fact that the braid group action works over arbitrary commutative rings, but since it may be useful for other purposes, we record it here. A sequence of exceptional $RQ$-lattices $(X_1,\dots,X_r)$ is called an \emph{exceptional sequence} provided that $\Hom_{RQ}(X_i,X_j) = \Ext^1_{RQ}(X_i,X_j)=0$ for all $i>j$.

\begin{theorem}
If $(X,Y)$ is an exceptional pair of $RQ$-lattices of pointwise constant rank, then there are exceptional pairs $(L_X Y,X)$ and $(Y,R_Y X)$ of lattices of pointwise constant rank given as follows. If $\Hom_{RQ}(X,Y)=0$ then $L_X Y$ and $R_Y X$ are given by the universal exact sequences
\[
0 \to  Y \to  L_X Y \to  X \otimes_R \Ext^1_{RQ}(X,Y) \to  0,
\]
\[
0 \to  Y \otimes_R D\Ext^1_{RQ}(X,Y) \to  R_Y X \to  X \to  0.
\]
where $D = \Hom_R(-,R)$. If $\Hom_{RQ}(X,Y)\neq 0$, then the universal map
\[
f:X \otimes_R \Hom_{RQ}(X,Y) \to  Y
\]
is an epimorphism or a monomorphism, and $L_X Y$ is its kernel or cokernel, and the universal map 
\[
g:X \to  \Hom_R(\Hom_{RQ}(X,Y),Y) \cong Y \otimes_R D\Hom_{RQ}(X,Y)
\]
is an epimorphism or a monomorphism and $R_Y X$ is its kernel or cokernel.
\end{theorem}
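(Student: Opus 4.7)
The plan is to reduce everything to the classical theory over algebraically closed fields, using Theorem~\ref{t:homprop} for the projectivity of $\Hom$ and $\Ext^1$ and the commutative-algebra lemmas of Section~2 to propagate properties between $R$ and its residue fields. Write $H = \Hom_{RQ}(X,Y)$ and $E = \Ext^1_{RQ}(X,Y)$; by Theorem~\ref{t:homprop} these are finitely generated projective $R$-modules of constant ranks $\hom(\alpha,\beta)$ and $\ext(\alpha,\beta)$, with $\alpha,\beta$ the rank vectors of $X,Y$. By Lemma~\ref{l:liftlatts} we may lift the data through a surjection $R'\to R$ with $R'$ reduced, preserving rank vectors and all vanishing, and so we will freely assume $R$ is reduced.

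Suppose first that $H = 0$. By Lemma~\ref{l:extprop}(v) applied with $P=E$ and $P'=R$,
\[
\Ext^1_{RQ}(X \otimes_R E,\, Y) \;\cong\; E \otimes_R \Hom_R(E,R) \;\cong\; \End_R(E),
\]
and the identity element provides a canonical extension $0\to Y \to L_X Y \to X\otimes_R E\to 0$; the sequence defining $R_Y X$ is constructed symmetrically from $DE$. For any homomorphism $R\to K$ with $K$ algebraically closed, inducing via $(-)^K$ produces, by Lemma~\ref{l:extprop}(iii) and Theorem~\ref{t:homprop}(ii), the classical universal extension for $(X^K,Y^K)$. Hence each $(e_i L_X Y)^K$ has dimension independent of $K$; since $L_X Y$ sits in a sequence of finitely generated $R$-modules it is itself finitely generated, and Lemma~\ref{l:redring}(ii) shows each $e_i L_X Y$ is projective of constant rank, making $L_X Y$ a pointwise-constant-rank lattice. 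The vanishing statements needed to conclude that $(L_X Y,X)$ is exceptional follow by combining Lemma~\ref{l:extprop}(iii), Theorem~\ref{t:homprop}(ii), and Lemma~\ref{l:commring}(i) with their classical counterparts for $(X^K,Y^K)$.

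Now suppose $H\neq 0$, and let $f : X\otimes_R H\to Y$ be the evaluation map. For each $R\to K$, $f^K$ is the classical universal map for the exceptional pair $(X^K,Y^K)$, and the classical theory gives the dichotomy that $f^K$ is either surjective or injective, with the alternative depending only on the constants $\alpha$, $\beta$, $\hom(\alpha,\beta)$. In particular the same alternative holds uniformly in $K$. In the surjective case, Lemma~\ref{l:commring}(ii) gives that $f$ is surjective; since $Y$ is $R$-projective, the sequence $0\to \Ker f\to X\otimes_R H\to Y\to 0$ splits over $R$, exhibiting $L_X Y = \Ker f$ as a finitely generated projective $R$-module whose pointwise rank vectors are constant by dimension count. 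In the injective case, Lemma~\ref{l:redring}(iii) gives that $f$ is a split monomorphism, so $L_X Y = \Coker f$ is a direct summand of $Y$ and hence again a pointwise-constant-rank lattice. The construction of $R_Y X$ via the adjoint map $g$ is parallel, using the natural isomorphism $\Hom_R(H,Y)\cong Y\otimes_R DH$, which holds because $H$ is finitely generated projective.

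The remaining exceptional-pair conditions, and the assertion that the universal sequences are short exact sequences of $RQ$-modules, reduce by Lemma~\ref{l:extprop}(iii)--(iv), Theorem~\ref{t:homprop}(ii) and Lemma~\ref{l:commring}(i) to the corresponding statements over each algebraically closed field $K$, which are classical. The main obstacle is the global epi/mono dichotomy for $f$: one must confirm that the same alternative occurs at every geometric point of $\Spec R$. This is exactly what the pointwise-constant-rank hypothesis on $(X,Y)$ guarantees, by making $\alpha$, $\beta$, and $\hom(\alpha,\beta)$ into global invariants rather than functions of $\mathfrak{p}\in \Spec R$; without this hypothesis a mixed behaviour over a disconnected spectrum would be possible and the construction would not globalize.
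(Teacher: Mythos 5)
Your proposal is correct and follows essentially the same route as the paper: lift to a reduced ring via Lemma~\ref{l:liftlatts}, then over a reduced ring deduce the epi/mono dichotomy and the splitting properties by base change to algebraically closed residue fields using Lemmas~\ref{l:commring} and~\ref{l:redring} together with Theorem~\ref{t:homprop}. The paper simply compresses the reduced case into a citation of \cite[Lemma 4]{CBrirq}, so your write-up is in effect an expansion of the same argument rather than a different one.
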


\begin{proof}
First a remark about the case when $R=K$, an algebraically closed field. The existence of mutations was shown in \cite[Lemma 6]{CBesrq}, but for this description of the mutations \cite{CBesrq} cites \cite{Go} and \cite{CBrirq} cites \cite{Ru}. There is now a more direct reference, namely \cite[\S4, p2290]{HubKr}.

The case when $R$ is reduced follows in the same way as \cite[Lemma 4]{CBrirq}, using Lemma~\ref{l:redring}. For $R$ not reduced, using Lemma~\ref{l:liftlatts} we lift $X$ and $Y$ to lattices $X'$ and $Y'$ for a reduced ring $R'$. If $X$ and $Y$ have rank vectors $\alpha$ and $\beta$, then so do $X'$ and $Y'$, so $\Hom_{R'Q}(Y',X')$ is a projective $R'$-module of rank $\hom(\beta,\alpha)$, the same as the rank of the $R$-module $\Hom_{RQ}(Y,X)$, which is zero. Also $X'$ and $Y'$ are exceptional, for example by Theorem~B. Thus $(X',Y')$ is an exceptional pair. Thus the mutations $L_{X'} Y'$ and $R_{Y'}X'$ are given as stated in the theorem, and the result for $L_X Y$ and $R_Y X$ follows.
\end{proof}

It follows that there is an action of the braid group on $n$ strings on the set of exceptional sequences of length $n$ consisting of pointwise constant rank $RQ$-lattices. Moreover, by the results of \cite{CBesrq}, if $Q$ is a quiver without oriented cycles and with $n$ vertices, say $1,\dots,n$, ordered so that there is no arrow from $i$ to $j$ for $j>i$, then every exceptional sequence of this type which is complete (meaning of length $n$) is in the orbit of one of the form $(Y_1, \dots, Y_n)$ where each $Y_i$ is a representation of $Q$ given by a finitely generated projective $R$-module of constant rank 1 at vertex $i$ and zero at every other vertex.

\end{document}